\newtheorem{thm}{Theorem}
\newtheorem{lem}{Lemma}
\newtheorem{que}{Question}
\DeclareMathOperator{\conv}{conv}
\begin{document}

\title{New irrational polygons \\ with Ehrhart-theoretic period collapse}
\author{Quang-Nhat LE
\thanks{This project has received funding from the European Research Council (ERC) under the European Union's Horizon 2020 research and innovation programme (grant agreement No [ERC StG 716424 - CASe]).}
}

\maketitle

\begin{abstract}
In a recent paper, Cristofaro-Gardiner--Li--Stanley \cite{cristofaro2015new} constructed examples of irrational triangles whose Ehrhart function (i.e. lattice-point count) is a polynomial, when restricted to positive integer dilation factors. This is very surprising because the Ehrhart functions of rational polygons are usually only quasi-polynomials. We demonstrate that most of their triangles can also be obtained by a simple cut-and-paste procedure that allows us to build new examples with more sides. Our examples might potentially have applications in the theory of symplectic embeddings.
\end{abstract}

\section{Introduction}
The \textbf{Ehrhart function} of a polytope $P \subset \mathbb{R}^d$ is $L_P(t) := \#(tP \cap \mathbb{Z}^d)$, where $t > 0$ is the dilation factor. For the rest of this paper, the dilation factor $t$ will be a positive integer and our polytopes will be closed and possibly non-convex. 

The following theorem was proved by Eug\`ene Ehrhart and Ian Macdonald.
\begin{thm}[\cite{ehrhart1967probleme} and \cite{macdonald1971polynomials}] If $P$ is an \textbf{integer} polytope, i.e. $\text{vertices}(P) \subset \mathbb{Z}^d$, and $t$ is a positive integer, then $L_P(t)$ is a polynomial.

If $P$ is an \textbf{rational} polytope, i.e. $\text{vertices}(P) \subset \mathbb{Q}^d$, and $t$ is a positive integer, then $L_P(t) = a_d(t)t^d + \dots + a_0(t)$ is a \textbf{quasi-polynomial}, which means it is a ``polynomial'' whose coefficients $a_i(t), i = 0, \cdots, d$ are periodic functions on $t \in \mathbb{Z}_{>0}$. 
\end{thm}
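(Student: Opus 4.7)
The plan is to prove both statements simultaneously by analyzing the \emph{Ehrhart series} $\mathrm{Ehr}_P(z) := \sum_{t \geq 0} L_P(t)\, z^t$ and showing it is a rational function of a very restrictive shape. The key reduction is to triangulate $P$ into rational $d$-simplices $\Delta_1, \ldots, \Delta_m$ whose vertices lie among the vertices of $P$, and then handle the boundary overlaps by an inclusion-exclusion over the induced face structure (equivalently, by a half-open decomposition of $P$). Since a finite signed sum of (quasi-)polynomials is again a (quasi-)polynomial of no larger degree, this reduces the problem to the case of a single rational simplex.

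For a rational $d$-simplex $\Delta$ with vertices $v_0, \ldots, v_d \in \mathbb{Q}^d$, I would lift to the cone
\begin{equation*}
  C(\Delta) := \mathrm{cone}\bigl((1, v_0), \ldots, (1, v_d)\bigr) \subset \mathbb{R}^{d+1},
\end{equation*}
so that for each integer $t \geq 0$, $L_\Delta(t)$ equals the number of lattice points of $C(\Delta)$ in the hyperplane $\{x_0 = t\}$. Consequently $\mathrm{Ehr}_\Delta(z)$ is obtained from the multivariate generating function $\sigma_C(z_0, \ldots, z_d) := \sum_{p \in C(\Delta) \cap \mathbb{Z}^{d+1}} z^p$ by the specialization $z_0 = z$, $z_1 = \cdots = z_d = 1$.

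To compute $\sigma_C$, let $N$ be a common denominator of all coordinates of the $v_i$, so that $w_i := (N, N v_i) \in \mathbb{Z}^{d+1}$. Let $\Pi := \{\sum_i \lambda_i w_i : 0 \leq \lambda_i < 1\}$ be the half-open fundamental parallelepiped; it is bounded, so $\Pi \cap \mathbb{Z}^{d+1}$ is finite. Every lattice point of $C(\Delta)$ decomposes uniquely as $q + \sum_{i=0}^d n_i w_i$ with $q \in \Pi \cap \mathbb{Z}^{d+1}$ and $n_i \in \mathbb{Z}_{\geq 0}$, whence
\begin{equation*}
  \sigma_C(z_0, \ldots, z_d) \;=\; \frac{\sum_{q \in \Pi \cap \mathbb{Z}^{d+1}} z^q}{\prod_{i=0}^{d}(1 - z^{w_i})}.
\end{equation*}
Specializing yields $\mathrm{Ehr}_\Delta(z) = f(z)/(1 - z^N)^{d+1}$ for some $f \in \mathbb{Z}[z]$, and a standard partial-fraction expansion shows that $L_\Delta(t)$ is a quasi-polynomial in $t$ of degree at most $d$ and period dividing $N$. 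In the integer case $N = 1$, the denominator becomes $(1-z)^{d+1}$, so $L_\Delta(t)$ is an honest polynomial of degree at most $d$.

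The main obstacle I anticipate is not the cone computation, which is essentially mechanical once set up, but the bookkeeping in the triangulation step: in dimension $\geq 2$, neighbouring simplices share lower-dimensional faces whose lattice points must not be multiply counted. Resolving this cleanly --- for example by fixing a shelling order and assigning each boundary face to exactly one piece (a half-open triangulation), or by applying Möbius inversion over the lattice of face intersections --- is the delicate combinatorial part. Once that is in place, the (quasi-)polynomial property transfers through the signed sum, and the theorem follows in both cases.
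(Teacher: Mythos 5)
This statement is the classical Ehrhart--Macdonald theorem, which the paper quotes as known background with citations to Ehrhart and Macdonald and does not prove, so there is no in-paper argument to compare against. Your proposal is the standard and correct proof: triangulate, pass to the simplicial cone over each simplex, tile the cone by translates of the half-open fundamental parallelepiped of the integer generators $w_i=(N,Nv_i)$ to get $\mathrm{Ehr}_\Delta(z)=f(z)/(1-z^N)^{d+1}$, and read off a quasi-polynomial of period dividing $N$ (a polynomial when $N=1$) by partial fractions; the overlap bookkeeping you flag is genuinely the only delicate point and is handled exactly as you suggest, either by half-open (shelled) triangulations or by inclusion--exclusion over faces, whose Ehrhart functions are themselves quasi-polynomials by induction on dimension.
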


%For a certain rational polytope $P$, its Ehrhart function, which is a priori a quasi-polynomial, becomes an honest polynomial; that is the coefficients $c_i(t)$ are all constant. 

We say that $P$ exhibits \textbf{Ehrhart-theoretic period collapse} if $P$ is not an integer polytope and its Ehrhart function $L_P(t)$ is a polynomial in $t$. This phenomenon has been studied extensively for rational polytopes by McAllister--Woods \cite{mcallister2003minimum}, Haase--McAllister \cite{mcallister2003minimum}, Beck--Sam--Woods \cite{beck2007maximal}, etc. 

For irrational polytopes, the behavior of the Ehrhart function can be very wild. Usually, one only expects an asymptotic, but not exact, formula for $L_P(t)$. Therefore, the work of Cristofaro-Gardiner--Li--Stanley \cite{cristofaro2015new} came as a startling surprise: they demonstrated a family of irrational triangles whose Ehrhart functions are simply polynomials, and thus, established the existence of period collapse in irrational polytopes. Note that the restriction $t \in \mathbb{Z}_{>0}$ is crucial here: if $t$ is allowed to vary in $\mathbb{R}_{>0}$, such a result is impossible.

\begin{thm}[\cite{cristofaro2015new}, part of Theorem 1.1]
	Consider the triangle $T = T_{\alpha,\beta}$ with vertices at $(0,0)$, $(h,0)$ and $(k,0)$ such that $1/h + 1/k = \alpha \in \mathbb{Z}_{>0}$, $h + k = \beta \in \mathbb{Z}_{>0}$, $h,k > 0$ and the slope $h/k$ is irrational.
    \begin{itemize}
    \item $L_T(t)$ is a quasi-polynomial.
    \item If $\alpha = 1$ or $(\alpha,\beta) \in \{(3,3),(2,4)\}$, then $L_T(t) = \frac{1}{2} \frac{\beta}{\alpha} t^2 + \frac{1}{2} \beta t + 1$ is a polynomial.
    \end{itemize}
\end{thm}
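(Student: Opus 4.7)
The plan is to prove the theorem by a cut-and-paste argument (interpreting the stated vertices as $(0,0), (h,0), (0,k)$, the evident intended triangle). The hypotenuse of $T$ satisfies $kx+hy = hk = \beta/\alpha$, and the diagonal ray $y=x$ meets it at the unique point $p := (1/\alpha,1/\alpha)$. First, I would cut $T$ along $\overline{Op}$ into two sub-triangles $T_a := \conv\{(0,0),(h,0),p\}$ and $T_b := \conv\{(0,0),p,(0,k)\}$. The integer shears $\phi_a(x,y) = (x-y,y)$ and $\phi_b(x,y)=(x,y-x)$, both elements of $\mathrm{SL}_2(\mathbb{Z})$, respectively send $T_a$ and $T_b$ to the axis-aligned right triangles $T_1 := \conv\{(0,0),(h,0),(0,1/\alpha)\}$ and $T_2 := \conv\{(0,0),(1/\alpha,0),(0,k)\}$. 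Because these shears preserve $\mathbb{Z}^2$ and because the dilated cut $t\overline{Op}$ contains exactly $\lfloor t/\alpha\rfloor + 1$ lattice points, inclusion-exclusion yields
\[
L_T(t) \;=\; L_{T_1}(t) + L_{T_2}(t) - \lfloor t/\alpha\rfloor - 1.
\]

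Next I would evaluate the two summands by slicing. Slicing $tT_1$ along horizontal lattice lines and $tT_2$ along vertical ones gives
\[
L_{T_1}(t) = \sum_{j=0}^{\lfloor t/\alpha\rfloor}\bigl(\lfloor h(t-\alpha j)\rfloor + 1\bigr), \qquad L_{T_2}(t) = \sum_{j=0}^{\lfloor t/\alpha\rfloor}\bigl(\lfloor k(t-\alpha j)\rfloor + 1\bigr),
\]
so adding reduces the problem to a sum of the quantities $\lfloor hs\rfloor + \lfloor ks\rfloor$ as $s$ ranges over $\{t,\,t-\alpha,\,t-2\alpha,\,\ldots,\,t \bmod \alpha\}$.

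The crux is the arithmetic identity $\lfloor hs\rfloor + \lfloor ks\rfloor = \beta s - 1$ for every integer $s \geq 1$: since $h+k = \beta$ is an integer while $h$ and $k$ are both irrational, we have $\{hs\} + \{ks\} = 1$ for $s \geq 1$. (The identity fails at $s=0$, where the sum is $0$ rather than $-1$; this boundary contribution has to be tracked carefully.) Plugging this in and writing $t = \alpha n + r$ with $r \in \{0,1,\ldots,\alpha-1\}$, a direct arithmetic simplification yields
\[
L_T(t) \;=\; \frac{1}{2}\frac{\beta}{\alpha}\, t^2 + \frac{1}{2}\beta\, t + 1 + \varepsilon_{\alpha,\beta}(r),
\]
where $\varepsilon_{\alpha,\beta}(0) = 0$ and $\varepsilon_{\alpha,\beta}(r) = \beta r(\alpha - r)/(2\alpha) - 1$ for $r \in \{1,\ldots,\alpha-1\}$. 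Since $\varepsilon_{\alpha,\beta}$ depends only on $r = t \bmod \alpha$, this already shows that $L_T$ is a quasi-polynomial of period dividing $\alpha$, proving the first bullet.

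Finally, $L_T$ is a genuine polynomial exactly when $\varepsilon_{\alpha,\beta}$ vanishes identically, i.e.\ when $\beta r(\alpha - r) = 2\alpha$ for every $r \in \{1,\ldots,\alpha-1\}$. A short case check shows this happens precisely when $\alpha = 1$ (the condition is vacuous) or $(\alpha,\beta) \in \{(3,3),(2,4)\}$, in each case collapsing the formula to the claimed $\frac{1}{2}\frac{\beta}{\alpha}t^2 + \frac{1}{2}\beta t + 1$. I expect the only technical obstacle to be the bookkeeping at the $s=0$ boundary in the floor identity: the $-1$ versus $0$ discrepancy there is exactly what supplies the constant term $+1$ of the Ehrhart polynomial, so the argument has to keep track of this term precisely; one should also verify explicitly that the two shears map $T_a, T_b$ into the first quadrant rather than reflected versions, but this is a direct check on the three vertices.
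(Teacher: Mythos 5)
Your argument is correct, and it is worth noting that the paper does not actually prove this statement --- it is quoted from Cristofaro-Gardiner--Li--Stanley --- so any proof here is necessarily your own. What you propose is in fact a clean generalization of the paper's cut-and-paste construction: for $\alpha=1$ your two shears are exactly the inverses of the maps $\phi_1,\phi_2$ used in the proof of Theorem~\ref{thm:quad} (unfolding $T_{1,\beta}$ along the diagonal into the pyramid over $[-h,k]\times\{1\}$), and for $\alpha>1$ you are carrying out in full the ``rational apex'' variant that the paper only gestures at with ``the analysis will be a bit trickier, but not too hard.'' The key structural difference from the paper's setting is that your cut segment $\overline{Op}$ with $p=(1/\alpha,1/\alpha)$ is integral only when $\alpha=1$; for $\alpha>1$ it is merely rational, and this is precisely what produces the periodic defect $\varepsilon_{\alpha,\beta}(r)=\beta r(\alpha-r)/(2\alpha)-1$. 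I checked the arithmetic: the floor identity $\lfloor hs\rfloor+\lfloor ks\rfloor=\beta s-1$ is valid for integers $s\ge 1$ (the hypotheses do force both $h$ and $k$ to be irrational, since $h+k\in\mathbb{Z}$ and $h/k\notin\mathbb{Q}$), the $s=0$ boundary term works out to give $\varepsilon(0)=0$, and the case analysis of $\beta r(\alpha-r)=2\alpha$ correctly isolates $\alpha=1$ and $(\alpha,\beta)\in\{(3,3),(2,4)\}$ (for $\alpha\ge 4$ the $r=1$ condition forces $\beta=2+2/(\alpha-1)\notin\mathbb{Z}$). Your route buys more than the stated theorem: it gives the explicit quasi-polynomial, and hence also the converse direction that the paper mentions but does not state. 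One cosmetic point: the theorem's vertex list ``$(h,0)$ and $(k,0)$'' is a typo for $(h,0)$ and $(0,k)$, which you correctly silently fixed.
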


We call the triangles $T_{1,\beta}$ \textbf{common CGLS triangles} and $T_{3,3}, T_{2,4}$ \textbf{exceptional CGLS triangles}. We note that Theorem of 1.1 of \cite{cristofaro2015new} also contains a converse to the above statements.

In this paper, we will construct many more examples that include common CGLS triangles as special cases. 
\begin{thm}\label{thm:quad}
	Let $h,k > 0$ be two irrational numbers which sum up to an integer. Then, the quadrilateral $Q = Q_{h,k} = \conv\{(0,0),(h,0),(1,1),(0,k)\}$ exhibits Ehrhart-theoretic period collapse. More concretely, $L_Q(t) = \frac{h+k}{2} t^2 + \frac{h+k}{2} t + 1$ is a polynomial.
    
    If additionally $1/h + 1/k = 1$, then the quadrilateral $Q$ degenerates to the common CGLS triangle $T_{1,h+k}$.
\end{thm}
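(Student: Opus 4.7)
The plan is to decompose $Q$ along the lattice segment from $(0,0)$ to $(1,1)$ into
$$T_1 = \conv\{(0,0),(h,0),(1,1)\}, \qquad T_2 = \conv\{(0,0),(1,1),(0,k)\},$$
and to apply inclusion--exclusion, $L_Q(t) = L_{T_1}(t) + L_{T_2}(t) - (t+1)$, where the correction counts the lattice points on the shared diagonal from $(0,0)$ to $(t,t)$. This reduction exchanges $T_1$ and $T_2$ under $h \leftrightarrow k$, so it suffices to analyze one of them.

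Next, I would apply the integer shear $(x,y) \mapsto (x-y,y)$ to $tT_1$; as an element of $\mathrm{SL}_2(\mathbb{Z})$ it preserves the lattice-point count, and it carries $tT_1$ onto the right triangle with vertices $(0,0)$, $(th,0)$, and $(0,t)$. Because $h$ is irrational, for $0 \le y \le t-1$ the bound $h(t-y)$ is never an integer, so counting row by row gives
$$L_{T_1}(t) \;=\; t + 1 + \sum_{j=1}^{t} \lfloor hj \rfloor,$$
after the substitution $j = t-y$. A completely symmetric argument, using the shear $(x,y) \mapsto (x, y-x)$ on $tT_2$, yields the analogous formula with $h$ replaced by $k$.

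The heart of the proof is the elementary identity $\lfloor hj \rfloor + \lfloor kj \rfloor = (h+k)j - 1$, valid for every integer $j \geq 1$ under our hypotheses. Indeed $hj$ and $kj$ are both irrational, so $\{hj\},\{kj\} \in (0,1)$; yet $\{hj\} + \{kj\} = (h+k)j - \lfloor hj \rfloor - \lfloor kj \rfloor$ is an integer, forcing $\{hj\} + \{kj\} = 1$. Summing this identity from $j=1$ to $t$ gives $\tfrac{1}{2}(h+k)t(t+1) - t$, and combining with the inclusion--exclusion above yields exactly $\frac{h+k}{2}t^2 + \frac{h+k}{2}t + 1$.

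For the degenerate statement, the point $(1,1)$ lies on the segment from $(h,0)$ to $(0,k)$, whose line has equation $x/h + y/k = 1$, precisely when $1/h + 1/k = 1$; there $Q$ collapses to the triangle $\conv\{(0,0),(h,0),(0,k)\}$, which is the common CGLS triangle $T_{1,h+k}$. The main conceptual obstacle is spotting the fractional-parts cancellation: this is the one place where the \emph{sum} $h+k$ being an integer is used, and it is what destroys the periodicity one would naively expect in the Ehrhart quasi-polynomial.
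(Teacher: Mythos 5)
Your proof is correct and is in essence the paper's own argument run in reverse: the paper builds $Q$ by cutting the pyramid over the irrational segment $[-h,k]\times\{1\}$ along the edge from $(0,0)$ to $(0,1)$ and regluing the two pieces along the diagonal $(0,0)$--$(1,1)$ via unimodular maps, which produces exactly your triangles $T_1$ and $T_2$, and your shears are (up to composition with a lattice automorphism) the inverses of those gluing maps. Your key identity $\lfloor hj\rfloor+\lfloor kj\rfloor=(h+k)j-1$ is precisely the paper's one-dimensional lemma $L_{[a,b]}(s)=s(b-a)$ applied to $[-h,k]$, so the two arguments agree step for step, with yours written out as an explicit row-by-row floor-sum computation rather than as a transport of the pyramid's Ehrhart polynomial.
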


\begin{thm}\label{thm:vertex}
	For an even integer $N \geq 4$, there exists a star-shaped polygon $P$ with $N$ vertices which are irrational points; that is, at least one coordinate is irrational.
\end{thm}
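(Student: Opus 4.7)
The plan is to construct the $N$-gon by iterating a local cut-and-paste move along edges of a starting polygon, each iteration adding two irrational vertices without changing the Ehrhart polynomial. The move is the one implicit in Theorem \ref{thm:quad}: along an edge which, after a lattice-preserving affine transformation, takes the form of the chord $\overline{(h,0)(0,k)}$ with $h,k > 0$ irrational and $h+k \in \mathbb{Z}$, one replaces the chord by the two-edge polyline $(h,0) \to (1,1) \to (0,k)$. The enclosed bulge is the triangle $\Delta = \conv\{(h,0),(1,1),(0,k)\}$, and comparing $L_Q$ with $L_{T_{1,h+k}}$ forces $\Delta$ to be \emph{lattice-neutral} at every positive integer dilation. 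Thus a single swap preserves the Ehrhart polynomial and introduces two irrational corners at once.

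With this move in hand, I would build the $N$-gon iteratively. Start at $N = 4$ with a copy of $Q_{h,k}$ and first perform two small preliminary swaps near its lattice vertices $(0,0)$ and $(1,1)$, replacing each by a nearby irrational vertex while keeping the vertex count at $4$ (for instance, by trimming a tiny triangle off each corner and pasting in an equivalent triangle whose third vertex has one irrational coordinate, chosen so that the trimmed and pasted pieces are again lattice-neutral at every integer $t$). For each of the remaining $(N-4)/2$ steps, pick a boundary edge, bring it into chord form by an element of the integer affine group, and apply the bump: this produces two new irrational vertices of the form $(a + h_i, b)$ and $(a, b + k_i)$ with $h_i, k_i$ irrational and $h_i + k_i \in \mathbb{Z}$. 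After $(N-4)/2$ iterations the polygon has exactly $N$ vertices, all irrational, and the Ehrhart polynomial is still of the form $\frac{h+k}{2}t^2 + \frac{h+k}{2}t + 1$ (modulated by the total lattice-neutral bulging).

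The main obstacle is proving that the local cut-and-paste is lattice-neutral at every positive integer $t$ and in every affine frame that occurs in the iteration; this is really an affine-invariant local version of Theorem \ref{thm:quad}, and it is the technical heart of the argument. A secondary, geometric obstacle is to keep the polygon simple and star-shaped throughout: I would address this by taking all irrational parameters $h_i, k_i$ generic and sufficiently small, orienting every bump outward from a single fixed kernel point (chosen strictly inside the initial $Q_{h,k}$), and checking inductively that the line of sight from this kernel point to each boundary point is preserved after each insertion. Irrationality of the $N$ final vertices then comes for free from the irrationality of the parameters used in the preliminary corner swaps and in each of the subsequent bumps.
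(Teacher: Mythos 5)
Your induction rests entirely on the claim that the bump triangle $\Delta=\conv\{(h,0),(1,1),(0,k)\}$ is ``lattice-neutral,'' which you justify by ``comparing $L_Q$ with $L_{T_{1,h+k}}$.'' This conflates two different triangles. Deleting the bump from $Q_{h,k}$ leaves $T=\conv\{(0,0),(h,0),(0,k)\}$ with the \emph{same} legs $h,k$; the CGLS triangle $T_{1,h+k}$ has different legs $h',k'$ determined by $1/h'+1/k'=1$, $h'+k'=h+k$, and merely happens to share the Ehrhart polynomial of $Q_{h,k}$. The correct inclusion--exclusion is $L_{Q_{h,k}}(t)=L_T(t)+L_\Delta(t)-L_E(t)$ with $E$ the chord, so the net contribution of your move is $L_\Delta-L_E=L_{Q_{h,k}}-L_T$, which is a polynomial if and only if $L_T$ is. By the CGLS theorem and its converse, $L_T$ is a polynomial essentially only when $1/h+1/k=1$ (for generic $h,k$ with $h+k\in\mathbb{Z}$ the quantity $1/h+1/k$ is not even an integer) --- and in exactly that case $(1,1)$ lies on the chord, $\Delta$ degenerates, and the move inserts nothing. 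So whenever the bump is nondegenerate it perturbs the Ehrhart function by a non-polynomial amount: the move is self-defeating. Two further concrete problems: the one new vertex a nondegenerate bump actually creates is the image of $(1,1)$ under an integral affine map, hence a \emph{lattice} point, the opposite of what Theorem \ref{thm:vertex} requires; and the ``preliminary corner swaps'' meant to irrationalize $(0,0)$ and $(1,1)$ are never constructed --- the paper's cautionary computation with $T_{1,\beta}\cup\rho_1(T_{1,\beta})$ shows precisely why ad hoc cut-and-paste along an edge with an irrational endpoint leaves an uncancelled $\lfloor ht\rfloor$ and destroys even quasi-polynomiality.

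The paper's proof avoids all of this by never cutting along an edge with an irrational endpoint. It subdivides the full angle at the origin into $N$ unimodular sectors, fills the $i$-th sector with $M(V_i,V_{i+1})\,Q_{h_0,k_0}$, where $Q_{h_0,k_0}=T_{1,5}$ is a triangle because $1/h_0+1/k_0=1$, and assigns $h_0$ and $k_0$ to the rays alternately. The origin is then an interior point, the $N$ vertices are the irrational points $c_iV_i$, all gluing happens along the integral-direction segments $\conv\{O,c_iV_i\}$, and in Lemma \ref{lem:elab} the floor terms cancel in matched pairs via $\lfloor th_0\rfloor+\lfloor tk_0\rfloor=t(h_0+k_0)-1$ precisely because the $h_0$'s and $k_0$'s alternate. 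If you want an inductive ``add vertices'' step, the viable one is the sector-insertion move of Step 0 in the proof of Theorem \ref{thm:edge}, which only ever cuts along rays through primitive integer vectors.
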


\begin{thm}\label{thm:edge}
	For $3 \leq N \neq 5$, there exists a star-shaped polygon $P$ with $N$ edges all of which have irrational slopes such that $P$ exhibits Ehrhart-theoretic period collapse.
\end{thm}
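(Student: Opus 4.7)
The plan is to give explicit constructions of star-shaped polygons for each $N \in \{3,4,6,7,8,\ldots\}$. My model case is $N=4$: the \emph{diamond} $D := \conv\{(h,0),(0,k),(-h,0),(0,-k)\}$, where $h,k>0$ are irrational, $1/h+1/k=1$, and $h+k = \beta \in \mathbb{Z}_{>0}$. Such $h,k$ exist for most integers $\beta\geq 5$ as the (irrational) roots of $x^2-\beta x+\beta = 0$, and all four edges of $D$ have slope $\pm k/h$, which is irrational. Decomposing $D$ into four copies of the common CGLS triangle $T_{1,\beta}$ (one per quadrant) and applying inclusion--exclusion (with pairwise overlaps along the four axis-segments and diagonal pairs meeting only at the origin) gives
\[
L_D(t) \;=\; 4L_{T_{1,\beta}}(t) \;-\; 2(\lfloor th\rfloor + \lfloor tk\rfloor) \;-\; 3.
\]
The identity $\lfloor th\rfloor + \lfloor tk\rfloor = t(h+k)-1$, valid for $t\in\mathbb{Z}_{>0}$ because $h+k\in\mathbb{Z}$ and $h$ is irrational, then collapses this to the polynomial $L_D(t) = 2\beta t^2 + 3$.

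For the other values of $N$, I will use a cut-and-paste procedure building on the diamond: along an edge of a polygon $P$ already known to exhibit period collapse, attach or remove a small triangle $\Delta$ with two irrational-slope boundary edges, arranged so that $\Delta$'s Ehrhart contribution, combined with the lattice-count correction from the shared edge, is again a polynomial in $t$. Applied symmetrically to a pair of opposite edges of $D$, this procedure produces an all-irrational-slope hexagon $P_6$; iterating single-edge cut-and-paste on $P_6$ then gives polygons $P_N$ for all $N\geq 6$. The case $N=3$ is handled by a separate ad-hoc construction, for example by gluing three CGLS triangles in an asymmetric arrangement or by modifying an exceptional CGLS triangle.

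The main technical obstacle will be verifying, for each cut-and-paste step, that the attached triangle's Ehrhart contribution is polynomial; this reduces to floor-identities generalizing $\lfloor th\rfloor + \lfloor tk\rfloor = t(h+k)-1$, and forces the glued pieces' irrational parameters to satisfy rigid integrality relations analogous to $h+k \in \mathbb{Z}$. The exclusion of $N=5$ is intrinsic to this scheme: starting from the $N=4$ diamond, the minimal Ehrhart-preserving modification that introduces only irrational-slope new edges is a symmetric pair of attachments (advancing $N$ by two), because a single attached triangle with mixed lattice/irrational vertices generically fails to have polynomial Ehrhart contribution on its own. Hence the scheme jumps from $N=4$ directly to $N=6$, and no other combination of base case and step reaches the pentagon.
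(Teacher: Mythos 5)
Your $N=4$ base case is correct and is exactly the paper's: the diamond is the polygon built from the data $\{(e_1,h_0),(e_2,k_0),(-e_1,h_0),(-e_2,k_0)\}$, and your inclusion--exclusion count $L_D(t)=4L_{T_{1,\beta}}(t)-2(\lfloor th\rfloor+\lfloor tk\rfloor)-3=2\beta t^2+3$ checks out. Everything past that point, however, is a plan rather than a proof, and the plan has concrete holes. The cut-and-paste step is never actually constructed: you do not specify the triangle $\Delta$, the edge it is glued to, or why its lattice-point contribution is polynomial --- you explicitly defer this as ``the main technical obstacle.'' Worse, the scheme is internally inconsistent: you argue that a \emph{single} attachment generically fails (which is why $N=5$ is excluded), yet you then invoke ``iterating single-edge cut-and-paste on $P_6$'' to reach every $N\ge 6$; as described, your moves change $N$ by two at a time and never produce $N=7$ or $N=9$. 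The construction that actually works is to tile the full angle at the origin by an even number of unimodular sectors whose bounding rays alternately carry parameters from $\{h_0,h_0+1,\dots\}$ and $\{k_0,k_0+1,\dots\}$, filling each sector with a unimodular image of a (possibly degenerate) quadrilateral $Q_{h,k}$. The floor terms $\lfloor tc_i\rfloor$ on the shared rays then cancel in $h_0$/$k_0$ pairs via $\lfloor th_0\rfloor+\lfloor tk_0\rfloor=t(h_0+k_0)-1$, and the edge count is tuned by choosing which sectors degenerate to triangles (one outer edge) versus genuine quadrilaterals (two outer edges). This realizes $N=4,6,7,9$ and then adds $4$ at a time by splitting a ray into two new sectors; it also gives the honest reason $N=5$ is unreachable (with four sectors, forcing three of them to be triangles forces the fourth to be one as well), which your heuristic does not.

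Separately, you cannot dispatch $N=3$ with ``an ad-hoc gluing of CGLS triangles.'' Every CGLS triangle has two axis-parallel, hence rational-slope, edges, so no obvious gluing produces a triangle all of whose edges have irrational slopes; the paper's own proof in fact covers only $N\ge 4$, $N\ne 5$, and its closing section lists the existence of such a triangle as an open question that the author suspects has a negative answer. So this part of your proposal asserts something that is not known to be true and is certainly not established by the argument you sketch.
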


Our method is to start with an example of irrational period collapse in one dimension. Then we use the pyramid construction and a cut-and-paste procedure to produce irrational polygons with period collapse.

Potentially, this method can be generalized to higher dimensions. Moreover, because the CGLS triangles were discovered in the context of symplectic embeddings, it is conceivable that our new examples might also have applications in constructing extremal cases of symplectic embeddings. For the connection between Ehrhart theory and symplectic embeddings, see \cite{mcduff2012embedding}, \cite{cristofaro2013ehrhart}, \cite{cristofaro2018ghost} and \cite{schlenk2018symplectic}.

\subsection*{Acknowledgement}
The author benefits a lot from conversations with Dan Cristofaro-Gardiner, Sinai Robins and Richard Stanley. He thanks ICERM and its wonderful staff for their hospitality during part of this project. He would like to express his gratitude to Karim Adiprasito, whose grant \textit{ERC StG 716424 - CASe} supports this work.

\section{Period collapse in one dimension}
We explore the phenomenon of irrational period collapse in one dimension.
% The phenomenon of period collapse can be understood completely in one dimension.
% If one of $a$ and $b$ is an integer, then both of them are, and hence, $L_P(t) = tn + 1$ is a polynomial. Otherwise, $a$ and $b$ are irrational. 

\begin{lem}
	Given a $1$-dimensional polytope $B = [a,b]$ with irrational numbers $a < b$ satisfying $b-a \in \mathbb{Z}$, we have $L_{B}(t)$ is a polynomial.
\end{lem}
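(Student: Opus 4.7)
The plan is to compute $L_B(t)$ directly by counting lattice points in the dilated interval $[ta,tb]$, exploiting the fact that irrationality of $a$ and $b$ is preserved when multiplying by a positive integer. The lemma is essentially a one-line calculation once the right observations are set up.

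First, I would note that for any $t \in \mathbb{Z}_{>0}$, both $ta$ and $tb$ are irrational, so neither endpoint of $tB = [ta,tb]$ is itself a lattice point. This is exactly where the hypothesis $t \in \mathbb{Z}_{>0}$ (rather than $t \in \mathbb{R}_{>0}$) enters: if $t$ were allowed to be real, one could choose $t = 1/a$ to place a lattice point on the boundary and cause a jump in $L_B$. Next, the dilated length is $t(b-a) = tn$ for the positive integer $n := b-a$, which immediately forces $\{ta\} = \{tb\}$ as fractional parts.

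Now, since the interval is closed and has no integer endpoints, the count of integers in $[ta,tb]$ is $\lfloor tb\rfloor - \lceil ta\rceil + 1$. Because $ta$ is irrational, $\lceil ta\rceil = \lfloor ta\rfloor + 1$; because $\{ta\} = \{tb\}$, we have $\lfloor tb\rfloor - \lfloor ta\rfloor = t(b-a) = tn$. Combining these gives
\[
L_B(t) \;=\; \lfloor tb\rfloor - \lfloor ta\rfloor \;=\; (b-a)\,t,
\]
which is a polynomial of degree one in $t$, as desired.

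There is essentially no hard step: the only conceptual point to check is that the two features of the hypothesis, irrationality of the endpoints and integrality of their difference, fit together perfectly. Irrationality guarantees that neither endpoint is ever a lattice point for any positive integer $t$, while integrality of $b-a$ synchronizes the fractional parts of $ta$ and $tb$ so that the floor differences collapse to the exact length. If I wanted to be careful, I would also remark that an analogous statement fails the moment one relaxes either condition (for instance, $B = [\sqrt{2}, \sqrt{2}+1/2]$ shows that integrality of $b-a$ is essential), which motivates the hypotheses of the lemma and sets the stage for the higher-dimensional constructions in the subsequent sections.
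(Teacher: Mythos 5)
Your proof is correct and follows essentially the same route as the paper: both compute $L_B(t) = \lfloor tb\rfloor - \lfloor ta\rfloor = tn$ using the irrationality of $ta, tb$ and the integrality of $t(b-a)$. Your version is slightly more careful in justifying the count via $\lceil ta\rceil = \lfloor ta\rfloor + 1$, a step the paper leaves implicit.
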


\begin{proof}
	Suppose $b - a = n \in \mathbb{Z}_{>0}$. Since $t$ is an integer, both $ta$ and $tb$ are irrational. Therefore, $L_B(t) = \#([ta,tb] \cap \mathbb{Z}) = \lfloor tb \rfloor - \lfloor ta \rfloor = \lfloor ta + tn \rfloor - \lfloor ta \rfloor = \lfloor ta \rfloor + tn - \lfloor ta \rfloor = tn$ is a polynomial.
\end{proof}

The above $1$-dimensional polytope will be used as the base of the pyramid that will be constructed in the next section. Also, we believe Ehrhart-theoretic period collapse can be completely understood in one dimension, but it would require a bit more work.

\section{The pyramid construction}
We place $B$ on the horizontal line $x_2 = 1$ in $\mathbb{R}^2$. Then, we construct $P = \conv(B \cup \{(0,0)\})$ as the convex hull of $B$ and the origin. In other words, $P$ is the pyramid with base $B$ and apex the origin. Therefore, the Ehrhart function of the pyramid $P$ is related to that of the base $B$ in the following way:
\begin{equation}
	L_{P} (t) = 1 + \sum_{s=1}^t L_{B} (s) = 1 + \sum_{s=1}^t s n = \frac{n}{2} t^2 + \frac{n}{2} t + 1,
\end{equation} 
which is a polynomial! Therefore, we have already obtained a new example of irrational period collapse in two dimensions.

\section{A cut-and-paste procedure}
Let us describe a cut-and-paste procedure to produce more examples of irrational polygons with period collapse. First, we cut $P$ along an \textit{integral} edge $E$ to obtain two triangles $H_1$ and $H_2$. Then, we choose two integral affine transformations $\phi_1$ and $\phi_2$ such that the images $\phi_1 (H_1)$ and $\phi_2 (H_2)$ share a common edge, which is $\phi_1(E) = \phi_2(E)$, an integral segment. Then, we will see that the quadrilateral $Q = \phi_1 (H_1) \cup \phi_2 (H_2)$ is an irrational polygon whose Ehrhart function $L_Q (t)$ coincides with $L_{P(t)}$ and is a polynomial. 

To see this, we note that any integral affine transformation $\phi$ keeps the Ehrhart function invariant. By definition, $\phi$ is of the form $x \mapsto Ax + b$ with $A \in GL_2(\mathbb{Z})$ and $b \in \mathbb{Z}^2$. Because $\phi(\mathbb{Z}^2) = \mathbb{Z}^2$, we have $L_{\phi(P)} (t) = L_P (t)$ for any polytope $P \subset \mathbb{R}^2$. Therefore, we can compute
\begin{align}
	L_Q (t) &= L_{\phi_1(H_1)} (t) + L_{\phi_2(H_2)} (t) - L_{\phi_1(E)} (t) \nonumber \\
    	&= L_{H_1} (t) + L_{H_2} (t) - L_{E} (t) = L_{P} (t), 
\end{align} 
as stated in the previous paragraph.

Now we will show how to obtain the common CGLS triangles (as degenerate quadrilaterals) from the above procedure. 

\begin{proof}[Proof of Theorem \ref{thm:quad}]
Given two positive irrational numbers $h, k$ such that $h + k$ is an integer, we take $B = [-h,k]$ and construct the pyramid $P = \conv(\{(0,0)\} \cup B \times \{1\})$. Next, we cut $P$ along the edge $E = \conv\{(0,0), (0,1)\}$ into two triangles $H_1$ and $H_2$. We define
\[ \phi_1 (x) = \left(\begin{matrix} -1 & -1 \\ 0 & -1 \end{matrix}\right) x + \left(\begin{matrix} 1 \\ 1 \end{matrix}\right), \qquad
	\phi_2 (x) = \left(\begin{matrix} 0 & -1 \\ 1 & -1 \end{matrix}\right) x + \left(\begin{matrix} 1 \\ 1 \end{matrix}\right), \]
which satisfy $\phi_1(E) = \phi_2(E) = \conv\{(0,0),(1,1) \}$. Then, $$Q = Q_{h,k} =  \phi_1(H_1) \cup \phi_2(H_2) = \conv\{(0,0),(h,0),(1,1),(0,k)\}.$$ Observe that, if $h$ and $k$ satisfy the additional assumption $1/h + 1/k = 1$ as in the construction of common CGLS triangles, then the quadrilateral $Q$ degenerates to the triangle $T_{1,h+k}$.
\end{proof}

The exceptional CGLS triangles can also be obtained by a similar procedure; the difference is the apex of the pyramid $P$ is a rational point, instead of an integer point. The analysis will be a bit trickier, but not too hard.

We caution that the gluing process has to be taken with care. In fact, the union of two polygons (with disjoint interiors) with period collapse does not necessarily exhibit period collapse. For instance, consider a common CGLS triangle $T_{1,\beta}$ and denote $h,k$ to be the irrational solutions of the system $1/h + 1/k = 1, h + k = \beta$. Let $\rho_1$ be the reflection about the $x_1$-axis $x_2 = 0$ and $E = \conv \{(0,0), (h,0)\}$ the edge of $T_{1,\beta}$ on the $x_1$-axis. Then, we compute the Erhart function of the union $P = T_{1,\beta} \cup \rho_1(T_{1,\beta})$:
\begin{align}
	L_P(t) &= L_{T_{1,\beta}} (t) + L_{\rho_1 (T_{1,\beta})} (t) - L_E (t) = 2 L_{T_{1,\beta}} (t) - (\lfloor ht \rfloor + 1) \nonumber \\
    	&= (h+k) t^2 + (h+k) t - \lfloor ht \rfloor + 1.
\end{align}
This is not even a quasi-polynomial, and thus, the triangle $P$ does not exhibit period collapse.

\section{A more elaborate construction}
In order to prove Theorem \ref{thm:edge}, we employ a more elaborate construction than in the previous section. We divide the full angle at the origin into an even number of \textit{unimodular} sectors, and then, fill in each sector with an appropriate (possibly degenerate) quadrilateral $Q_{h,k}$ of Theorem \ref{thm:quad}.

We take positive irrational numbers $h_0,k_0$ such that $1/h_0 + 1/k_0 = 1$ and $h_0 + k_0$ is an integer. For example, we can choose $h_0 = \frac{5+\sqrt{5}}{2}$ and $k_0 = \frac{5-\sqrt{5}}{2}$ with $h_0 + k_0 = 5$ and $1/h_0 + 1/k_0 = 1$. We write $H^+ = \{h_0 + 1, h_0 + 2, \dots \}, K^+ = \{k_0 + 1, k_0 + 2, \dots \}$ and $H^\geq = \{h_0\} \cup H^+, K^\geq = \{k_0\} \cup K^+$. Given $h \in H^\geq$ and $k \in K^\geq$, we have
\begin{equation}
	h + k \in \mathbb{Z}_{>0}, \qquad 1/h + 1/k \leq 1/h + 1/k = 1,
\end{equation}
where, in the last inequality, the equality case happens if and only if $h = h_0$ and $k = k_0$. Thus, $Q_{h,k}$ is a quadrilateral, except that, when $(h,k) = (h_0,k_0)$, $Q_{h_0,k_0}$ is a triangle. 

Let us define a \textbf{unimodular sector} $\alpha(V_1, V_2)$ to be the angular region between two rays $\overrightarrow{OV_1}$ and $\overrightarrow{OV_2}$ which emanate from the origin $O$ and pass through two primitive integer points $V_1, V_2 \subset \mathbb{Z}^2$ which together form a unimodular matrix. Equivalently, we require the triangle $OV_1V_2$ to be an integer triangle with area $1/2$. Note that we can always subdivide a unimodular sector $\alpha(V_1, V_2)$ into two unimodular sectors $\alpha(V_1, V_1 + V_2)$ and $\alpha(V_1 + V_2, V_2)$. Therefore, we can subdivide the full angle at the origin into $k$ unimodular sectors for any $k \geq 3$. 

Now, let us divide the full angle at the origin into $2k$ unimodular sectors with $k \geq 2$. These sectors are separated by $2k$ rays $r_1, \cdots, r_{2k}$, which are indexed cyclically. Each ray $r_i$ is generated by a primitive integer vector $V_i$ and assigned a number $c_i$ such that, for odd $i$, $c_i \in H^\geq$, while for even $i$, $c_i \in K^\geq$. Therefore, our data is $\mathcal{D}_{2k} = \{(V_i, c_i): i = 1, \cdots, 2k\}$.

For each sector $\alpha_i$ whose extreme rays are $r_i$ and $r_{i+1}$, we fill it with the image of the quadrilateral $Q_{c_i,c_{i+1}}$ under the transformation by the unimodular matrix $M(V_i,V_{i+1})$, formed by the coordinates of $V_i$ and $V_{i+1}$. Notice that the edges of the quadrilaterals $M(V_i,V_{i+1}) Q_{c_i,c_{i+1}}$ on the rays $r_1, \cdots, r_{2k}$ match perfectly. Therefore, we can denote those edges $E_1, \cdots, E_{2k}$ and form the polygon $P$ as the union of all these quadrilaterals. 

\begin{lem}\label{lem:elab}
	Define the polytope $P = \bigcup_{i=1}^{2k} M(V_i,V_{i+1}) Q_{c_i,c_{i+1}}$. Then, $P$ is star-shaped, its edges have irrational slopes and its Ehrhart function $L_P(t)$ is a polynomial.
\end{lem}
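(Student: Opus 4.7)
My plan is to verify the three assertions in turn. Star-shapedness is immediate: each $Q_{c_i,c_{i+1}}$ has the origin as a vertex, and the $2k$ sectors $\alpha_i$ together cover the full angle at $O$, so $P$ is star-shaped with respect to $O$. The boundary of $P$ is obtained from the images under $M(V_i,V_{i+1})$ of the ``outer'' edges of $Q_{c_i,c_{i+1}}$, i.e.\ those not lying on the coordinate axes: in the non-degenerate case, the two edges from $(c_i,0)$ to $(1,1)$ and from $(1,1)$ to $(0,c_{i+1})$, of slopes $\tfrac{1}{1-c_i}$ and $1-c_{i+1}$; in the degenerate triangular case $(c_i,c_{i+1})=(h_0,k_0)$, the single edge from $(h_0,0)$ to $(0,k_0)$, of slope $-k_0/h_0$. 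All these slopes are irrational (because $c_i,c_{i+1}\in H^\geq\cup K^\geq$ are irrational, and $h_0/k_0$ is irrational), and unimodular integer matrices preserve irrationality of slopes, so every edge of $P$ has irrational slope.

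For the Ehrhart count I would use inclusion--exclusion over the pieces $A_i:=M(V_i,V_{i+1})Q_{c_i,c_{i+1}}$. Since $A_i\subset\overline{\alpha_i}$, the only pairwise overlaps are the shared edges $A_i\cap A_{i+1}=E_{i+1}$, and all higher intersections collapse to $\{O\}$. A non-origin lattice point on $tE_i$ is therefore counted twice in $\sum_i L_{A_i}(t)$ while $O$ is counted $2k$ times, which gives
\begin{equation*}
    L_P(t) \;=\; \sum_{i=1}^{2k} L_{A_i}(t) \;-\; \sum_{i=1}^{2k} \lfloor tc_i\rfloor \;-\; (2k-1).
\end{equation*}
Unimodular invariance of the Ehrhart function together with Theorem~\ref{thm:quad} gives $L_{A_i}(t)=\tfrac{c_i+c_{i+1}}{2}(t^2+t)+1$; a routine simplification then reduces the above to $L_P(t)=\bigl(\sum_i c_i\bigr)(t^2+t)+1-\sum_i\lfloor tc_i\rfloor$.

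The main obstacle, and the only step that is not pure bookkeeping, is to verify that $\sum_i\lfloor tc_i\rfloor$ is linear in $t$. Here I would exploit the arithmetic built into the construction: the odd-indexed $c_i$ all lie in $h_0+\mathbb{Z}$ and the even-indexed ones in $k_0+\mathbb{Z}$, so for integer $t$ one has $\{tc_i\}=\{th_0\}$ or $\{tk_0\}$ respectively. Since $h_0+k_0\in\mathbb{Z}$ while $th_0,tk_0$ are irrational, the fractional parts satisfy $\{th_0\}+\{tk_0\}=1$, and with $k$ odd and $k$ even indices this gives $\sum_i\{tc_i\}=k$. Hence $\sum_i\lfloor tc_i\rfloor=t\sum_i c_i-k$, and substitution yields $L_P(t)=\bigl(\sum_i c_i\bigr)t^2+(k+1)$, a polynomial in $t$.
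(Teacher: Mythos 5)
Your proof is correct and follows essentially the same route as the paper: inclusion–exclusion over the unimodular sector pieces, unimodular invariance plus Theorem~\ref{thm:quad} for each piece, and the identity $\{th_0\}+\{tk_0\}=1$ (equivalently $\lfloor a\rfloor+\lfloor b\rfloor=a+b-1$ for irrationals with integer sum) to show that $\sum_i\lfloor tc_i\rfloor$ is linear in $t$; your final formula $L_P(t)=(\sum_i c_i)t^2+k+1$ agrees with the paper's. The only cosmetic difference is that you compute the edge slopes explicitly where the paper argues via the integer coordinates of two points on each edge.
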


\begin{proof}
Clearly, $P$ is star-shaped. Its edges contain two points $3$ of whose $4$ coordinates are integers, while the other is an irrational number. Therefore, all the edges of $P$ have irrational slopes.

We compute
\begin{align}
	L_P (t) &= \sum_{i=1}^{2k} L_{M(V_i,V_{i+1}) Q_{c_i,c_{i+1}}} (t) - \sum_{i=1}^{2k} L_{E_i} (t)  + L_{\{(0,0)\}} (t) \nonumber \\
    	&= \sum_{i=1}^{2k} L_{Q_{c_i,c_{i+1}}} (t) - \sum_{i=1}^{2k} L_{E_i} (t) + 1
\end{align}

Observe that $L_{E_i} (t) = \lfloor t(x_i+n_i) \rfloor + 1 = \lfloor tx_i \rfloor + tn_i + 1$ with $n_i \geq 0$ an integer and $x_i = h_0$ or $k_0$. Also, note that, if two irrational numbers $a,b > 0$ satisfy $a + b \in \mathbb{Z}$, then $\lfloor a \rfloor + \lfloor b \rfloor = a + b - 1$. Because half of the rays has $x_i = h_0$ and the other half has $x_i = k_0$, we have
\begin{align}
	\sum_{i=1}^{2k} L_{E_i} (t) &= 2k + t \sum_{i=1}^{2k} n_i + \sum_{i=1}^{k} (\lfloor th_0 \rfloor + \lfloor t k_0 \rfloor) \nonumber \\
    	&= 2k + t n_{total} + k (th_0 + tk_0 - 1) = (n_{total} + 5k) t + k
\end{align}

By Theorem \ref{thm:quad}, each $L_{Q_{c_i,c_{i+1}}} (t)$ is a polynomial in $t$. Therefore, we have proved that the Ehrhart function $L_P(t)$ is a polynomial, and thus, complete the proof of the lemma.
\end{proof}

To prove Theorems \ref{thm:vertex} and \ref{thm:edge}, the exact positions of the rays $r_i$ is not important, but we need to choose the numbers $c_i$ carefully so that $P$ has the right number of vertices/edges. Theorem \ref{thm:vertex} is easier than Theorem \ref{thm:edge}, so we start with its proof first.

\begin{proof}[Proof of Theorem \ref{thm:vertex}]
	Divide the full angle at the origin into $N = 2p$ unimodular sectors and assign to the rays $r_1, \dots, r_{N}$ the numbers $h_0$ and $k_0$, alternately. Then, we can produce from this data a polygon $P$ as the union of $N$ triangles. Then, $P$ has $N$ vertices which are irrational points, $P$ is star-shaped and its Ehrhart function is a polynomial by Lemma \ref{lem:elab}. This completes the proof.
\end{proof}

\begin{proof}[Proof of Theorem \ref{thm:edge}] The proof has $5$ steps.

	\textit{Step 0:} We show that if the theorem holds for $N$, then it holds for $N+4$
    
    Given the data $\mathcal{D}_{2k} = \{(V_1,c_1), \dots, (V_{2k},c_{2k})\}$ which yields a polygon $P$ with $N$ edges that satisfies Theorem \ref{thm:edge}, let us show that we can construct new data $\mathcal{D'}_{2k+2}$ that produces a new polygon $P'$ with $N+4$ edges and which also fulfills Theorem \ref{thm:edge}.
    
    Pick an index $i$, we will replace the pair $(V_i,c_i)$ in $\mathcal{D}_{2k}$ with three pairs $(V_i + V_{i-1}, c_i), (V_{i-1} + 2V_i + V_{i+1}, c'_{i}), (V_i + V_{i+1}, c_i)$ and obtain $\mathcal{D}'_{2k+2}$. Here, if $c_i \in H^\geq$, we take $c'_i$ to be any number in $K^+$; while if $c_i \in K^\geq$, then we take $c'_i \in K^+$ arbitrarily. In effect, we add two unimodular sectors which are filled by two quadrilaterals that contributes $4$ additional edges. Therefore, the polygon $P'$ constructed from the new data $\mathcal{D}'_{2k+2}$ has $N+4$ edges and satisfy Theorem \ref{thm:edge},by Lemma \ref{lem:elab}. This finishes Step 0.
    
    Thus, to complete the proof, we need to prove that the theorem is satisfied for $N = 4,6,7,9$.
    
    \textit{Step 1:} $N=4$.
    
    Take $P$ to be the union of the common CGLS triangle $Q_{h_0,k_0} = T_{1,5}$ and its reflected images about the two axes and about the origin. In other words, our data is $\mathcal{D}_4 = \{(e_1,h_0), (e_2,k_0), (-e_1,h_0), (-e_2,k_0)\}$, where $e_1,e_2$ are the standard basis vectors. Then, $P$ is a rhombus satisfying this theorem. By Step 0, this proves the theorem for $N = 4p \geq 4$.
    
    \textit{Step 2:} $N=6$.
    
    Take $\mathcal{D}_4 = \{(e_1,h_0), (e_2,k_0), (-e_1,h_0), (-e_2,k^+)\}$ with an arbitrary $k^+ \in K^+$. Since $Q_{h_0,k_0}$ is a triangle and $Q_{h^+,k^+}$ is a quadrilateral, the polygon $P$ constructed from $\mathcal{D}_4$ is the union of $2$ triangles and $2$ quadrilaterals and has $6$ edges. By Step 0, this proves the theorem for $N = 4p+2 \geq 6$.
    
    \textit{Step 3:} $N=7$.
    
    Take $\mathcal{D}_4 = \{(e_1,h_0), (e_2,k_0), (-e_1,h^+), (-e_2,k^+)\}$ with arbitrary $h^+ \in H^+$ and $k^+ \in K^+$. From $\mathcal{D}_4$, we produce a polygon $P$ which is the union of $1$ triangles and $3$ quadrilaterals and which has $7$ edges. By Step 0, this proves the theorem for $N = 4p+3 \geq 7$
    
    \textit{Step 4:} $N=9$. This case is a bit more complicated.
    
     Take $\mathcal{D}_6 = \{(e_1,h_0), (e_1+e_2,k_0), (e_2,h_0) (-e_1,k_0), (-e_1-e_2,h^+), (-e_2,k^+)\}$ with arbitrary $h^+ \in H^+$ and $k^+ \in K^+$. From $\mathcal{D}_6$, we produce a polygon $P$ which is the union of $3$ triangles and $3$ quadrilaterals and which has $9$ edges. By Step 0, this proves the theorem for $N = 4p+1 \geq 9$
     
     The proof is now complete.
    
\end{proof}

\section{Questions}

Here are further questions along the line of our results.

\begin{que}
	Regarding Theorem \ref{thm:vertex}, can we construct polygons with odd number of vertices which are all irrational points such that their Ehrhart functions are polynomials?
    
    In the proof of Theorem \ref{thm:vertex}, the ratio of the two coordinates of any vertex is always rational. Can we construct such polygons with (some or all) vertices whose coordinate ratios are irrational?
\end{que}

\begin{que}
	Regarding Theorem \ref{thm:edge}, can we construct triangles and pentagons whose edges have irrational slopes such that their Ehrhart functions are polynomials?    
\end{que}
    We suspect it is impossible to construct such triangles. The methods of \cite{} might be useful.

\begin{que}
	In the proofs of Theorems \ref{thm:vertex} and \ref{thm:edge}, some of the constructed polygons are convex, but there is no guarantee that all of them are. How can we consistently produce convex examples of irrational period collapse?
\end{que}

% Question: How do Fourier methods such as in Diaz--Le--Robins [] and Skriganov [] apply here?

Of course, it will be very interesting to expand our constructions to higher dimensions.

\bibliographystyle{alpha}
\bibliography{sample}

\end{document}